\newcommand{\bbN}{\mathbb{N}}
\newcommand{\bbQ}{\mathbb{Q}}
\newcommand{\bbR}{\mathbb{R}}
\newcommand{\cl}{\operatorname{cl}}
\newcommand{\dist}{\operatorname{dist}}
\newcommand{\xleq}{\leqslant}
\newtheorem{thm}{Theorem}
\newtheorem{lem}[thm]{Lemma}
\newtheorem{cor}[thm]{Corollary}
\subjclass[2020]{Primary: 54C08; Secondary: 54G20, 54C30}
\keywords{Separate continuity, feebly continuous function}
\title{A separate continuous function  not feebly continuous}
\author{Wojciech Bielas}
\address{Institute of Mathematics, University of Silesia in Katowice, ul. Bankowa 14, 40-007 Katowice}
\email{wojciech.bielas@us.edu.pl}
\begin{document}

\begin{abstract}
  We construct a separate continuous function $f\colon\mathbb{Q}\times\mathbb{Q}\to[0,1] $ and a dense subset $D\subseteq \mathbb{Q}\times\mathbb{Q}$ such that $f[D]$ is not dense in $f[\mathbb{Q}\times\mathbb{Q}]$, in other words,  $f$ is separate continuous and  not feebly (somewhat) continuous.
\end{abstract}

\maketitle

\section{Introduction}
The notion of a feebly continuous function was introduced and examined by Z. Frol\'{\i}k \cite{Frolik}.
Some authors say \emph{somewhat continuous function} instead of  feebly continuous function, for example \cite{Gentry}.
A function $f$ is \textit{feebly continuous}, if the preimage $f^{-1}[U]$ of any non-empty open subset $U$ has non-empty interior.
By \cite[Theorem 3]{Gentry}, any surjection $f\colon P\to Q$ is feebly continuous if and only if  the image $f[M]$ of a dense subset $M\subseteq P$ is  dense in $Q$.

If $X$ is a Baire space, $Y$ is of weight $\omega$ and $Z$ is a metric space, then any separate continuous function $f\colon X\times Y\to Z$ is feebly continuous, see \cite{Martin}.
Recall that a function $f\colon X\times Y\to Z$ is \textit{separate continuous}, if  functions $f(x,\cdot)$ and $f(\cdot,y)$ are continuous for each $(x,y)\in X\times Y$.
T. Neubrunn observed that the assumption of separate continuity can be weakened, see \cite[Theorem 2 and Theorem 3]{Neubrunn}.
Additionally, he presented counterexamples, which witness that   both conditions: $X$ being Baire and $Y$ being second countable, are necessary, see \cite[Example 3 and Example 4]{Neubrunn}.
In \cite[Example 3]{Neubrunn}, it is defined a function $f\colon (0,1)\times Y\to \{0,1\}$ such that  all functions $f(\cdot,y)$ are quasicontinuous and all functions $f(x,\cdot)$ are feebly continuous, but $f$ is not feebly continuous.
But \cite[Example 4]{Neubrunn} shows that there is a function $f\colon (\bbQ\cap(0,1))\times(1,\infty)\to \{0,1\}$ such that all sections $f(\cdot,y)$ are quasicontinuous and all sections $f(x,\cdot)$ are feebly continuous, but $f$ is not feebly continuous.

We construct a separate continuous function $f\colon\bbQ\times\bbQ\to[0,1]$ and a dense subset  $D\subseteq\bbQ\times\bbQ$ such that the image $f[\bbQ\times\bbQ]\subseteq[0,1]$ is dense and $f[D]$ is a singleton, hence $f$ is not feebly continuous.
The construction relies on the following observation.
If $(x_0,y_0),\ldots,(x_n,y_n)$ are such that $x_i\neq x_j$ and $y_i\neq y_j$ for $i<j\xleq n$, then the intersection
\[
  (\{x_n\}\times\bbQ)\cup(\bbQ\times\{y_n\})\cap \bigcup_{i< n}(\{x_i\}\times\bbQ)\cup(\bbQ\times\{y_i\})
\]
is finite.
Thus, having defined a continuous function on the set $\bigcup_{i< n}(\{x_i\}\times\bbQ)\cup(\bbQ\times\{y_i\})$, we can easily extend it to a continuous function on the set $\bigcup_{i\xleq n}(\{x_i\}\times\bbQ)\cup(\bbQ\times\{y_i\})$.
It remains to observe that there exists a dense subset $\{(x_n,y_n)\colon n\in\bbN\}\subseteq \bbQ\times\bbQ$ such that
\[  \bbQ\times\bbQ=\bigcup_{n\in\bbN}((\{x_n\}\times\bbQ)\cup(\bbQ\times\{y_n\}))
\]
with $x_i\neq x_j$ and $y_i\neq y_j$ for $i<j$.

\section{The construction}

We proceed to establish  the following lemma.

\begin{lem}\label{lem:funkcja}
  If  $(x_0,y_0),\ldots,(x_n,y_n)$ in $\bbQ \times \bbQ $ are different points and $\xi_i,\eta_i\in[0,1)$ for $0\xleq i<n$, then there exists a continuous function $f\colon (\{x_n\}\times \bbQ )\cup(\bbQ \times\{y_n\})\to[0,1]$ such that
  \begin{itemize}
  \item $f(x_n,y_n)=1$;
    \item $f(x_n,y_i)=\xi_i$ and $f(x_i,y_n)=\eta_i$,  for any $0\xleq i<n$;
        \item $\cl f[\{x_n\}\times \bbQ ]=[0,1]$.
   \end{itemize}
 \end{lem}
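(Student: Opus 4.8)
The plan is to treat the two arms of the domain separately and then glue. Write $V=\{x_n\}\times\bbQ$ and $H=\bbQ\times\{y_n\}$, so the domain is the cross $C=V\cup H$. Since $\{x_n\}$ and $\{y_n\}$ are closed in $\bbQ$, both $V$ and $H$ are closed in $\bbQ\times\bbQ$ and hence in $C$, and they meet only at $(x_n,y_n)$. By the pasting lemma it is therefore enough to build continuous maps $f_V\colon V\to[0,1]$ and $f_H\colon H\to[0,1]$ agreeing at the centre with common value $1$; their union is then automatically continuous on $C$. I first record that the hypotheses force $y_i\neq y_n$ and $x_i\neq x_n$ for $i<n$: otherwise, e.g.\ $(x_n,y_i)=(x_n,y_n)$, and the demands $\xi_i<1$ and $f(x_n,y_n)=1$ would be incompatible. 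In the intended application the $x_i$ are pairwise distinct and the $y_i$ are pairwise distinct, which I use below.

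Building $f_H$ is the easy half, as no density is required along $H$. Identifying $H$ with $\bbQ$ through $x\mapsto(x,y_n)$, I only need a continuous $[0,1]$-valued map realising $f_H(x_i)=\eta_i$ for $i<n$ and $f_H(x_n)=1$ at the distinct points $x_0,\dots,x_n$. Since $\bbQ$ is zero-dimensional I pick pairwise disjoint clopen neighbourhoods of these points, set $f_H$ to the prescribed value on each and, say, to $0$ on the clopen remainder; the result is locally constant, hence continuous.

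The substance of the proof is $f_V$, which must combine finite interpolation with a dense image. Identifying $V$ with $\bbQ$ via $y\mapsto(x_n,y)$, I seek continuous $f_V\colon\bbQ\to[0,1]$ with $f_V(y_i)=\xi_i$ for $i<n$, $f_V(y_n)=1$, and $\cl f_V[\bbQ]=[0,1]$. Again I use locally constant maps, relying on the fact that $\bbQ$ decomposes into countably many nonempty clopen pieces. I choose pairwise disjoint clopen neighbourhoods $W_0,\dots,W_n$ of $y_0,\dots,y_n$ carrying the values $\xi_0,\dots,\xi_{n-1},1$; the remaining clopen set $R=\bbQ\setminus\bigcup_{i}W_i$ is infinite (being a nonempty open subset of $\bbQ$), so I partition it as $R=\bigsqcup_{k\in\bbN}R_k$ into nonempty clopen pieces (for instance by intersecting with unit half-open intervals with rational endpoints) and set $f_V\equiv q_k$ on $R_k$, where $(q_k)_{k\in\bbN}$ enumerates $\bbQ\cap[0,1]$. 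This $f_V$ is locally constant, so continuous and $[0,1]$-valued, it meets all the prescribed conditions, and its image contains the dense set $\bbQ\cap[0,1]$, whence $\cl f_V[\bbQ]=[0,1]$.

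I expect the only genuinely delicate point to be securing the density $\cl f[\{x_n\}\times\bbQ]=[0,1]$ at the same time as the finitely many interpolation constraints and continuity; the gluing and the horizontal arm are routine. The resolution is that $\bbQ$ is totally disconnected, so a locally constant function pays nothing for continuity: this lets me scatter a dense set of constant values over clopen pieces of the vertical line while pinning the finitely many required values down on separate clopen neighbourhoods.
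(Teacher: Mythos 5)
Your proof is correct in substance, but it takes a genuinely different route from the paper's. The paper never works intrinsically in $\bbQ$ at all: it extends the finite prescription to a continuous $g\colon\bbR\times\bbR\to[0,1]$, multiplies by the bump function $h(x,y)=\max\{0,1-\dist((x,y),A)\}$, where $A$ is the finite set of constrained points, and takes $f$ to be the restriction of $h\cdot g$ to the cross. Continuity is then inherited from the plane, and the density clause $\cl f[\{x_n\}\times\bbQ]=[0,1]$ comes from connectedness: along the real line $\{x_n\}\times\bbR$ the product is continuous, equals $1$ at $y_n$ and vanishes far from $A$, so by the intermediate value theorem its image contains $[0,1]$, and density of $\bbQ$ in $\bbR$ transfers this to the rational line. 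You stay inside $\bbQ$ and replace the connectedness argument, which is unavailable there, by its opposite: zero-dimensionality. Locally constant functions on clopen partitions make continuity free, and density is bought by enumerating $\bbQ\cap[0,1]$ as constant values on countably many clopen pieces. Your version isolates what the lemma really uses (it would work verbatim on any countable metrizable space without isolated points), while the paper's is shorter and handles both arms, the interpolation, and the density with a single formula. Both arguments, incidentally, require the coordinates $x_0,\dots,x_n$ and $y_0,\dots,y_n$ to be pairwise distinct; the stated hypothesis that the \emph{points} are distinct does not imply this, and the lemma is false without it. You flag this explicitly, the paper leaves it implicit.

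Two small repairs are needed. First, your parenthetical recipe for partitioning $R$ fails as stated: a half-open interval with rational endpoints, intersected with $\bbQ$, is neither open nor closed in $\bbQ$ (it fails to be open at the included endpoint and fails to be closed at the excluded one). Use irrational endpoints instead, e.g.\ the sets $(k+\sqrt{2},\,k+1+\sqrt{2})\cap R$ for $k\in\mathbb{Z}$, discarding or merging the empty ones. Second, $R=\bbQ\setminus\bigcup_i W_i$ need not be nonempty without further care, since pairwise disjoint clopen neighbourhoods can perfectly well cover all of $\bbQ$; simply choose each $W_i$ bounded, so that $R$ contains every rational outside a bounded set. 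With that choice $R$ is unbounded, so infinitely many of the irrational-endpoint pieces above are nonempty, and your enumeration of $\bbQ\cap[0,1]$ over them goes through.
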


 \begin{proof}
   The set $A=\{(x_n,y_i)\colon i\xleq n\}\cup\{(x_i,y_n)\colon i\xleq n\}$ is finite, hence there exists a continuous function $g\colon \bbR\times\bbR\to[0,1]$ such that $g(x_n,y_n)=1$, $g(x_n,y_i)=\xi_i$ and $g(x_i,y_n)=\eta_i$, for any $i<n$. 
   Let $h\colon \bbR\times\bbR\to \bbR$ be given by the formula $$h(x,y)=\max\{0,1-\dist((x,y),A)\}$$
   for any $(x,y)\in\bbR\times \bbR$.
   The restriction  
   \[ (h\cdot g)|_{(\{x_n\}\times \bbQ )\cup(\bbQ \times\{y_n\})}=f
   \]
   is the desired function.
 \end{proof}

If  $A\subseteq X\times Y$, then the sets  $A_x=\{y\in Y\colon (x,y)\in A\}$ and $A^y=\{x\in X\colon (x,y)\in A\}$ are called \emph{sections}.
Fix a subset
$$A=\{(x_n,y_n)\colon n\in\bbN\}\subseteq \bbQ \times \bbQ $$
such that all sections $A_{x_n},A^{y_n}$ are  singletons and $\{x_n\colon n\in\bbN\}=\{y_n\colon n\in\bbN\}=\bbQ$.

\begin{thm}\label{thm:2.2}
If $A$ is defined as above, then    there exists a separate continuous function  $f\colon \bbQ \times \bbQ \to[0,1]$ such that $f[A]$ is a singleton, but the image $f[\bbQ \times\bbQ ]$ is dense.
 \end{thm}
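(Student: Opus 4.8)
The plan is to build $f$ by recursion on $n$, defining it one cross at a time and then gluing the pieces together. Write $C_n=(\{x_n\}\times\bbQ)\cup(\bbQ\times\{y_n\})$ for the $n$-th cross. Since $\{x_n\colon n\in\bbN\}=\bbQ$, every point of $\bbQ\times\bbQ$ lies on some $C_n$, so $\bbQ\times\bbQ=\bigcup_{n\in\bbN}C_n$ and a function assembled from pieces on the $C_n$ will be totally defined. At stage $n$ I would invoke Lemma~\ref{lem:funkcja} with the points $(x_0,y_0),\ldots,(x_n,y_n)$ to obtain a continuous $f_n\colon C_n\to[0,1]$ with $f_n(x_n,y_n)=1$ and $\cl f_n[\{x_n\}\times\bbQ]=[0,1]$, feeding in as the prescribed data $\xi_i,\eta_i$ the values that earlier stages have already attached to the two relevant overlap points.

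The structural fact that makes this coherent, noted already in the introduction, is that for $i<n$ the crosses meet in exactly two points, $C_n\cap C_i=\{(x_n,y_i),(x_i,y_n)\}$, the other two intersections being empty because $x_n\neq x_i$ and $y_n\neq y_i$. Thus when I reach stage $n$ only finitely many points of $C_n$ carry a previously chosen value, and these are precisely the points at which Lemma~\ref{lem:funkcja} permits me to prescribe $f_n$. Setting $\xi_i=f_i(x_n,y_i)$ and $\eta_i=f_i(x_i,y_n)$, the matching clauses $f_n(x_n,y_i)=\xi_i$ and $f_n(x_i,y_n)=\eta_i$ force $f_n$ to agree with every earlier $f_i$ on $C_n\cap C_i$, so $f=\bigcup_{n\in\bbN}f_n$ is a well-defined function on $\bbQ\times\bbQ$ taking values in $[0,1]$.

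I expect the consistency check to be the main obstacle, and the delicate point is that Lemma~\ref{lem:funkcja} demands prescribed values in $[0,1)$, not merely in $[0,1]$. I would close this by proving, by induction on $n$, that $f_n$ attains the value $1$ only at the diagonal point $(x_n,y_n)$. Indeed, in the Lemma's construction $f_n=(h\cdot g)|_{C_n}$ equals $1$ only where both $h=1$ and $g=1$; the factor $h$ confines attention to the finite prescribed set, on which $g=1$ holds solely at $(x_n,y_n)$ as soon as the data $\xi_i,\eta_i$ are strictly below $1$. A one-line index check shows that for $i<n$ neither overlap point $(x_n,y_i)$ nor $(x_i,y_n)$ coincides with $(x_i,y_i)$, so the inductive hypothesis places the values $f_i(x_n,y_i)$ and $f_i(x_i,y_n)$ in $[0,1)$; this keeps the Lemma applicable and closes the induction. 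The same computation shows that $(x_n,y_n)$ lies on $C_m$ only for $m=n$, so this point is never revisited and $f(x_n,y_n)=1$ for every $n$; hence $f[A]=\{1\}$ is a singleton.

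It remains to read off the two topological conclusions. For separate continuity, fix $x\in\bbQ$; as $\{x_n\colon n\in\bbN\}=\bbQ$ we have $x=x_n$ for some $n$, and since every later stage agrees with $f_n$ on the overlap, $f$ coincides with $f_n$ on all of $\{x_n\}\times\bbQ$; thus the section $f(x_n,\cdot)$ equals the continuous function $f_n(x_n,\cdot)$, and the horizontal sections $f(\cdot,y_n)$ are continuous by the symmetric argument using $\{y_n\colon n\in\bbN\}=\bbQ$. For density of the range, the third clause of Lemma~\ref{lem:funkcja} already gives $\cl f[\{x_0\}\times\bbQ]=[0,1]$, whence $f[\bbQ\times\bbQ]$ is dense in $[0,1]$. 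Consequently $f[A]=\{1\}$ is not dense in $f[\bbQ\times\bbQ]$; choosing $A$ dense in $\bbQ\times\bbQ$ as in the introduction then exhibits $f$ as separate continuous but not feebly continuous.
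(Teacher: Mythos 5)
Your proof is correct and follows essentially the same route as the paper: the same recursion over the crosses $C_n$, the same application of Lemma~\ref{lem:funkcja} with parameters $\xi_i=f_i(x_n,y_i)$ and $\eta_i=f_i(x_i,y_n)$, and the same gluing, separate-continuity and density arguments. The one point where you go beyond the paper's own write-up is your induction showing that each $f_n$ attains the value $1$ only at the point $(x_n,y_n)$: the paper applies Lemma~\ref{lem:funkcja} without verifying its hypothesis $\xi_i,\eta_i\in[0,1)$ (conditions (1)--(3) of its induction only give values in $[0,1]$), and your argument closes that small gap. Note, however, that your check relies on the construction inside the lemma's proof (namely that $f_n=(h\cdot g)|_{C_n}$ with $h,g\xleq 1$, so the value $1$ forces $h=g=1$) rather than on the lemma's statement; the cleanest repair would be either to add ``$f$ attains the value $1$ only at $(x_n,y_n)$'' as a fourth conclusion of the lemma, or to observe that the lemma's proof never uses $\xi_i,\eta_i<1$, so its hypothesis can simply be relaxed to $\xi_i,\eta_i\in[0,1]$.
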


 \begin{proof}
Let $f_0\colon (\{x_0\}\times\bbQ )\cup(\bbQ \times\{y_0\})\to[0,1]$ be any continuous function such that $f_0(x_0,y_0)=1$ and $\cl f_0[\{x_0\}\times\bbQ]=[0,1]$.
Assume that we have defined functions $f_0,\ldots,f_{n-1}$ such that if $i<k<n$, then
    \begin{enumerate}
   \item[(1)] $f_k\colon (\{x_k\}\times \bbQ )\cup (\bbQ \times\{y_k\})\to[0,1]$ is continuous,
   \item[(2)] $f_k(x_k,y_k)=1$,
     \item[(3)] $f_k(x_k,y_i)=f_i(x_k,y_i)$ and $f_k(x_i,y_k)=f_i(x_i,y_k)$.
  \end{enumerate}
  Using Lemma \ref{lem:funkcja} with parameters $\xi_i=f_i(x_n,y_i)$ and $\eta_i=f_i(x_i,y_n)$ for any $i<n$, we obtain a continuous function  $$f_n\colon (\{x_n\}\times \bbQ )\cup (\bbQ \times \{y_n\})\to [0,1]$$
  such that conditions (1)--(3) are satisfied for $i<k\xleq n$.
    Finally, let  $f\colon \bbQ \times \bbQ \to[0,1]$ be given by the formula $f(x_m,y_n)=f_m(x_m,y_n)$ for $m,n\in\bbN$.
  
 Fix $(x_m,y_n)\in \bbQ\times\bbQ$.
 For each $y_k\in\bbQ$, we have $f(x_m,y_k)=f_m(x_m,y_k)$, which implies that $f|_{\{x_m\}\times\bbQ}$ is continuous.
 By condition (3),  $f_k(x_k,y_n)=f_n(x_k,y_n)$, hence
 $f(x_k,y_n)=f_k(x_k,y_n)=f_n(x_k,y_n)$,
which implies that $f|_{\bbQ\times\{y_n\}}$ is continuous.

Functions $f$ and $f_0$ agree on the set $\{x_0\}\times \bbQ $, hence $\cl f[\{x_0\}\times\bbQ]=[0,1]$.
Clearly, we have $f[A]=\{1\}$.
 \end{proof}

\begin{cor}
There exists a separate continuous function $f\colon \bbQ \times \bbQ \to[0,1]$ which is not feebly continuous.
 \end{cor}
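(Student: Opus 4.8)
The plan is to deduce this directly from Theorem \ref{thm:2.2}, so the only work is to pin down a suitable set $A$ and then translate the conclusion into the language of feeble continuity. First I would fix $A=\{(x_n,y_n)\colon n\in\bbN\}$ to be a \emph{dense} subset of $\bbQ\times\bbQ$ of the required form; such a set exists by the observation recorded at the end of the introduction, where the conditions $x_i\neq x_j$ and $y_i\neq y_j$ force every section $A_{x_n}$, $A^{y_n}$ to be a singleton, while $\{x_n\colon n\in\bbN\}=\{y_n\colon n\in\bbN\}=\bbQ$. Thus this $A$ satisfies the hypotheses ``defined as above'' of Theorem \ref{thm:2.2}, and applying that theorem yields a separate continuous function $f\colon\bbQ\times\bbQ\to[0,1]$ with $f[A]=\{1\}$ and with $f[\bbQ\times\bbQ]$ dense in $[0,1]$.

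It then remains to check that this $f$ fails to be feebly continuous, and I would argue straight from the definition. Consider the non-empty open set $U=[0,\tfrac12)\subseteq[0,1]$, which does not contain the value $1$. Since $f[A]=\{1\}$, every point of $A$ is mapped outside $U$, so $f^{-1}[U]\cap A=\emptyset$. Because $A$ is dense, any non-empty open subset of $\bbQ\times\bbQ$ meets $A$ and hence cannot be contained in $f^{-1}[U]$; therefore $\inte f^{-1}[U]=\emptyset$. Thus the preimage of the non-empty open set $U$ has empty interior, which is exactly the negation of feeble continuity. Alternatively, one can route the argument through the characterisation quoted from \cite{Gentry}: viewing $f$ as a surjection onto $Q=f[\bbQ\times\bbQ]$, feeble continuity would force $f[A]$ to be dense in $Q$, whereas $f[A]=\{1\}$ is not dense in the set $Q$, which is dense in $[0,1]$ and so contains points near $0$. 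Since $Q$ is dense in $[0,1]$, feeble continuity of $f$ into $Q$ and into $[0,1]$ coincide, because every non-empty open $U\subseteq[0,1]$ meets $Q$ and $f^{-1}[U]=f^{-1}[U\cap Q]$.

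I expect no serious obstacle here, as Theorem \ref{thm:2.2} does all the substantive work; the one point deserving care is that the set $A$ fed into the theorem is genuinely dense, since this is precisely what upgrades ``$f[A]$ is a singleton'' to ``$f$ is not feebly continuous.'' The density of the image $f[\bbQ\times\bbQ]$ is not strictly needed for the failure of feeble continuity (a constant map already fails it), but it is what makes the example non-degenerate: together with separate continuity it witnesses that the theorem of \cite{Martin} genuinely requires the first factor to be a Baire space, $\bbQ$ being the simplest natural failure of that hypothesis.
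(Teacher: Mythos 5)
Your proposal is correct and takes essentially the same route as the paper: the paper's one-line proof likewise consists of requiring the set $A$ of Theorem \ref{thm:2.2} to be dense in $\bbQ\times\bbQ$, so that a dense set has image $\{1\}$, which is not dense in the dense image $f[\bbQ\times\bbQ]$. Your explicit check with $U=[0,\tfrac12)$ and the alternative argument via the Gentry characterisation merely spell out the details the paper leaves implicit.
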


 \begin{proof}
   It suffices to assume that the set $A$ in Theorem \ref{thm:2.2} is also dense in $\bbQ\times\bbQ$.
 \end{proof}

\end{document}